\newtheorem{theorem}{Theorem}
\newtheorem{lemma}{Lemma}
\newtheorem{remark}{Remark}
\newtheorem{definition}{Definition}
\newtheorem{corollary}{Corollary}
\begin{document}
\author{N. Nadirashvili, G. Tephnadze and  G. Tutberidze}
\title[Approximate Identity and Fejér means]{Almost everywhere and norm convergence of Approximate Identity and Fejér means of trigonometric and Vilenkin systems}

\address{N. Nadirashvili, The University of Georgia, School of science and
	technology, 77a Merab Kostava St, Tbilisi 0128, Georgia.}
\email{nato.nadirashvili@ug.edu.ge \ \ \ \ nato.nadirashvili@gmail.com  }
\address {G. Tephnadze, The University of Georgia, School of Science and Technology, 77a Merab Kostava St, Tbilisi, 0128, Georgia.}
\email{g.tephnadze@ug.edu.ge}
\address{G.Tutberidze, The University of Georgia, School of science and technology, 77a Merab Kostava St, Tbilisi 0128, Georgia.}
\email{g.tutberidze@ug.edu.ge, \ \ \ \ giorgi.tutberidze1991@gmail.com}
\thanks{The research was supported by Shota Rustaveli National Science Foundation grant FR-19-676.}

\date{}
\maketitle

\begin{abstract}
In this paper, we investigate very general approximation kernels with special properties, called an approximate identity, and prove almost everywhere and norm convergence of these general methods,  which consists of a class of summability methods and provide norm and a.e. convergence of these summability methods with respect to the trigonometric system. Investigations of these summations can be used to obtain norm convergence of Fej\'er means with respect to the Vilenkin system also, but these methods are not useful to study a.e. convergence in this case, because of some special properties of the kernels of Fej\'er means.
Despite these different properties we give alternative methods to prove almost everywhere convergence of  Fejér means with respect to the Vilenkin systems. 
\end{abstract}

\date{}

\textbf{2020 Mathematics Subject Classification.} 42C10.

\noindent \textbf{Key words and phrases:} Vilenkin system, trigonometric system, Fejér means, almost everywhere convergence, norm convergence.

\section{Introduction}

Let define Fourier coefficients,
partial sums, Fejér means and  kernels
with respect to the Vilenkin and trigonometric systems  of any integrable function  in the usual manner: 
\begin{eqnarray*}
	\widehat{f}^w(k) &:&=\int f\overline{w}_{k}d\mu \text{\thinspace
		\qquad\ \ \ \ }\left(k\in \mathbb{N}, \ \ w=\psi \ \ \text{or} \ \ w=T\right), \\
	S^w_{n}f &:&=\sum_{k=0}^{n-1}\widehat{f}\left( k\right) \psi _{k}\ \text{
		\qquad\ \ }\left(n\in \mathbb{N}_{+},\text{ }S_{0}f:=0, \ \  w=\psi \ \ \text{or} \ \ w=T \right), \\
	\sigma^w _{n}f &:&=\frac{1}{n}\sum_{k=0}^{n-1}S^w_{k}f\text{ \qquad\ \ \ \ \ }
	\left( \text{ }n\in \mathbb{N}_{+}\text{ }\right), \\
	K^w_{n} &:&=\frac{1}{n}\overset{n-1}{\underset{k=0}{\sum }}D^w_{k}\text{ \qquad\ \ \ \ \
		\ \ \thinspace }\left( \text{ }n\in \mathbb{N}_{+},\text{ }\ \  w=\psi \ \ \text{or} \ \ w=T \right),
\end{eqnarray*}
where  $\mathbb{N}_{+}$ denote the set of the positive integers, $\mathbb{N}:=
\mathbb{N}_{+}\cup \{0\}.$

It is well-known (for details see e.g books \cite{AVD}, \cite{Garsia}  and \cite{Tor1}) that  Fejér means 

$$\sigma^w_nf \ \left( w=\psi \  \text{or} \  w=T\right),$$
where $\sigma_n^\psi$ and $\sigma_n^T$ are Vilenkin-Fejér and trigonometric-Fejér means, respectively, converge to the function $f$ in $L_p$ norm, that is 

\begin{equation*}
\left\Vert \sigma^w_nf-f\right\Vert_p\to 0, \ \ \text{as} \ \ n\to\infty, \ \ \left( w=\psi \  \text{or} \  w=T\right) 
\end{equation*}
for any $f\in L_p$  where $1\leq p< \infty.$  Moreover, (see e.g. \cite{BN} and  \cite{BNT}) if we consider the maximal operator of Fejér means with respect to Vilenkin and trigonometric systems defined by

\begin{eqnarray*}
	\sigma ^{\ast,w}f&:=&\sup_{n\in\mathbb{N}}\left\vert \sigma^w_nf\right\vert \ \left( w=\psi \  \text{or} \  w=T\right),
\end{eqnarray*}
then the weak type inequality 

\begin{equation*}
\mu \left( \sigma ^{*,w}f>\lambda \right) \leq \frac{c}{\lambda }\left\|
f\right\| _{1},\text{ \qquad }\left(f\in L_1(G_m), \ \  \lambda >0\right)
\end{equation*}
was proved in Zygmund \cite{Zy} for the trigonometric series, in Schipp
\cite{Sc} for Walsh series and in Pál, Simon \cite{PS} (see also \cite{PTW}, \cite{tepthesis1}, \cite{We1,We2}) for bounded Vilenkin system. It follows that  the F\'ejer means with respect to trigonometric and Vilenkin systems of any integrable function converges a.e. to this function.

In the books \cite{Garsia}, \cite{MS1} and \cite{tepthesis} was investigated very general approximation kernels with special properties, called an approximate identity which consists of a class of summability methods such us Fejér means.

In this paper  we investigate more general summability methods which are called the approximation identities, which consist of a class of summability methods and provide norm and a.e. convergence of these summability methods with respect to the trigonometric system. Investigations of these summations can be used to obtain norm convergence of Fej\'er means with respect to the Vilenkin system also, but these methods are not useful to study a.e. convergence in this case, because of some special properties of the kernels of Vilenkin-Fej\'er means.
Despite these different properties, we give alternative methods to prove almost everywhere convergence of  Fejér means with respect to the Vilenkin systems. 

This paper is organized as follows: in order not to disturb our discussions
later on some definitions and notations are presented in Sections 2 and 3. Moreover, For the proofs of the main results we need some auxiliary Lemmas, some of them are new and of independent interest. These results are also presented in Sections 2 and 3. The main result with proof is given in Sections 4 and 5. \qquad

\section{Fejér means with respect to the Vilenkin systems }

Let $m:=(m_{0},m_{1},\dots)$ denote a sequence of the positive integers not
less than 2. Denote by 
\begin{equation*}
Z_{m_{k}}:=\{0,1,\dots,m_{k}-1\}
\end{equation*}
the additive group of integers modulo $m_{k}.$

Define the group $G_{m}$ as the complete direct product of the group $%
Z_{m_{j}}$ with the product of the discrete topologies of $Z_{m_{j}}$ $^{,}$%
s. In this paper we discuss bounded Vilenkin groups only, that is $\sup_{n\in \mathbb{N}}m_{n}<\infty .$

The direct product $\mu $ of the measures 
\begin{equation*}
\mu _{k}\left( \{j\}\right):=1/m_{k}\text{ \qquad }(j\in Z_{m_{k}})
\end{equation*}
is the Haar measure on $G_{m_{\text{ }}}$with $\mu \left( G_{m}\right) =1.$

The elements of $G_{m}$ are represented by the sequences 
\begin{equation*}
x:=(x_{0},x_{1},\dots,x_{k},\dots)\qquad \left( \text{ }x_{k}\in
Z_{m_{k}}\right).
\end{equation*}

It is easy to give a base for the neighbourhood of $G_{m}$ namely
\begin{equation*}
I_{0}\left( x\right):=G_{m}, \ \ 
I_{n}(x):=\{y\in G_{m}\mid y_{0}=x_{0},\dots,y_{n-1}=x_{n-1}\}\text{ }(x\in
G_{m},\text{ }n\in \mathbb{N})
\end{equation*}%
Denote $I_{n}:=I_{n}\left( 0\right) $ for $n\in \mathbb{N}$ and $\overline{%
I_{n}}:=G_{m}$ $\backslash $ $I_{n}$ $.$

Let
$
e_{n}:=\left( 0,\dots,0,x_{n}=1,0,\dots\right) \in G_{m}\qquad \left( n\in 
\mathbb{N}\right).
$
If we define the so-called generalized number system based on $m$ in the
following way: 
\begin{equation*}
M_{0}:=1,\text{ \qquad }M_{k+1}:=m_{k}M_{k\text{ }},\ \qquad (k\in \mathbb{N})
\end{equation*}%
then every $n\in \mathbb{N}$ can be uniquely expressed as $%
n=\sum_{k=0}^{\infty }n_{j}M_{j}$, where $n_{j}\in Z_{m_{j}}$ $~(j\in \mathbb{%
N})$ and only a finite number of $n_{j}`$s differ from zero. Let $\left\vert
n\right\vert :=\max $ $\{j\in \mathbb{N};$ $n_{j}\neq 0\}.$

If we define $I_n:=I_n\left(0\right),$ for  $n\in\mathbb{N}$ and $\overline{I_n}:=G_m \ \ \backslash $ $I_n$ and
\begin{equation*}
I_N^{k,l}:=\left\{ \begin{array}{l}I_N(0,\ldots,0,x_k\neq 0,0,...,0,x_l\neq 0,x_{l+1},\ldots ,x_{N-1},\ldots),\\
\text{for} \qquad k<l<N,\\
I_N(0,\ldots,0,x_k\neq 0,x_{k+1}=0,\ldots,x_{N-1}=0,x_N,\ldots ), \\
\text{for } \qquad l=N. \end{array}\right.
\end{equation*}
then
\begin{equation}\label{2}
\overline{I_{N}}=\left( \overset{N-2}{\underset{k=0}{\bigcup }}\overset{N-1%
}{\underset{l=k+1}{\bigcup }}I_{N}^{k,l}\right) \bigcup \left(
\underset{k=0}{\bigcup\limits^{N-1}}I_{N}^{k,N}\right) .  
\end{equation}

Next, we introduce on $G_{m}$ an orthonormal system which is called the
Vilenkin system. First define the complex valued function $r_{k}\left( x\right)
:G_{m}\rightarrow \mathbb{C},$ the generalized Rademacher functions, as 
\begin{equation*}
r_{k}\left( x\right):=\exp \left( 2\pi\imath x_{k}/m_{k}\right) \text{
\qquad }\left( \imath^{2}=-1,\text{ }x\in G_{m},\text{ }k\in \mathbb{N}%
\right).
\end{equation*}

Now define the Vilenkin system $\psi:=(\psi _{n}:n\in \mathbb{N})$ on $G_{m} 
$ as: 
\begin{equation*}
\psi _{n}\left( x\right):=\prod_{k=0}^{\infty }r_{k}^{n_{k}}\left( x\right) 
\text{ \qquad }\left( n\in \mathbb{N}\right).
\end{equation*}

By a Vilenkin polynomial we mean a finite linear
combination of Vilenkin functions. We denote the collection of Vilenkin polynomials by $\mathcal{P}$.

The Vilenkin system is orthonormal and complete in $L_{2}\left( G_{m}\right)
\,$ (for details see e.g. \cite{AVD, sws, Vi}). Specially, we call this system the Walsh-Paley one if $m\equiv 2$ (for details see \cite{gol} and \cite{sws}).

Recall that  (for details see e.g. \cite{AVD}, \cite{gat1} and \cite{gat})
if $n>t,$ $t,n\in \mathbb{N},$ then 
\begin{equation}\label{star1}
K^\psi_{M_n}\left(x\right)=\left\{ \begin{array}{ll}
\frac{M_t}{1-r_t\left(x\right) },& x\in I_t\backslash I_{t+1},\qquad x-x_te_t\in I_n, \\
\frac{M_n+1}{2}, & x\in I_n, \\
0, & \text{otherwise} \end{array} \right.
\end{equation}
and
\begin{equation} \label{star2}
n\left\vert K^\psi_n\right\vert\leq
c\sum_{l=0}^{\left\vert n\right\vert }M_l \left\vert K^\psi_{M_l} \right\vert.
\end{equation}%
By using these two properties of Fejér kernels we obtain the following:

\begin{lemma}\label{lemma7kn}
	For any $n,N\in \mathbb{N_+}$, we have that
	\begin{eqnarray} \label{fn40}
	&&\int_{G_m} K^\psi_n (x)d\mu(x)=1,\\
	&& \label{fn4}
	\sup_{n\in\mathbb{N}}\int_{G_m}\left\vert K^\psi_n(x)\right\vert d\mu(x)\leq c<\infty,\\
	&& \label{fn400}
	\int_{\overline{I_N}}\left\vert K^\psi_n(x)\right\vert d\mu (x)\rightarrow  0, \ \ \text{as} \ \ n\rightarrow  \infty, \ \ \text{for any} \ \ N\in \mathbb{N_+}.
	\end{eqnarray}
	where $c$ is an absolute constant.
\end{lemma}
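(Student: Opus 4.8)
The plan is to establish the three claims in order, using the explicit formula \eqref{star1} for $K^\psi_{M_n}$ together with the pointwise estimate \eqref{star2}, which reduces everything about general $K^\psi_n$ to the kernels $K^\psi_{M_l}$.

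First, \eqref{fn40} is immediate: $\int_{G_m}D^\psi_k\,d\mu=1$ for every $k$ (since $\widehat{1}(j)=0$ for $j\ge 1$, so integrating $D^\psi_k=\sum_{j=0}^{k-1}\psi_j$ picks out only the $j=0$ term), hence $\int_{G_m}K^\psi_n\,d\mu=\frac1n\sum_{k=0}^{n-1}\int_{G_m}D^\psi_k\,d\mu=1$.

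Second, for \eqref{fn4} I would first bound $\int_{G_m}|K^\psi_{M_l}|\,d\mu$ uniformly in $l$. Using \eqref{star1}, split $G_m$ into $I_l$, the sets $(I_t\setminus I_{t+1})\cap\{x-x_te_t\in I_l\}$ for $t<l$, and the complement where the kernel vanishes. On $I_l$ the contribution is $\frac{M_l+1}{2}\cdot\mu(I_l)=\frac{M_l+1}{2M_l}\le 1$. On the $t$-th piece, $|1-r_t(x)|$ is bounded below by a constant $c_m>0$ depending only on $\sup_k m_k$ (since $r_t(x)$ is a nontrivial root of unity of order dividing $m_t\le\sup m_k$), so $|K^\psi_{M_l}|\le M_t/c_m$ there, while that set has measure $\le \mu(I_t\setminus I_{t+1})\cdot\mu(I_l)/\mu(I_t)\le 1/M_t$ (more precisely one counts the $x_t$-values: the measure is $(m_t-1)/m_t\cdot M_t/M_l\cdot 1/M_t$... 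I would compute this carefully — it comes out $\le c/M_t$). Summing the geometric-type bound over $t=0,\dots,l-1$ gives $\int_{G_m}|K^\psi_{M_l}|\,d\mu\le C$ with $C$ absolute. Then \eqref{star2} with $\int$ applied gives $n\int|K^\psi_n|\le c\sum_{l=0}^{|n|}M_l\int|K^\psi_{M_l}|\le cC\sum_{l=0}^{|n|}M_l\le cC'M_{|n|}\le c''n$, since the $M_l$ grow at least geometrically (ratio $\ge 2$) and $M_{|n|}\le n$. Dividing by $n$ yields the uniform bound.

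Third, for \eqref{fn400} fix $N$. By \eqref{star2} it suffices to show $M_l\int_{\overline{I_N}}|K^\psi_{M_l}|\,d\mu$ is small, summed against $1/n$. For $l\le N$: from \eqref{star1}, $K^\psi_{M_l}$ is supported where $x-x_le_l\in I_l$, in particular the nonzero part lies in $I_{\min(l,N)}$-type sets; when $l<N$ the support of $K^\psi_{M_l}$ outside $I_N$ — I would check via \eqref{star1} that actually for $l\le N$ the kernel $M_l K^\psi_{M_l}$ restricted to $\overline{I_N}$ has $L_1$ norm bounded by the same absolute constant $C$ as before (it is a sub-collection of the same dyadic pieces). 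For $l>N$: now the support condition $x-x_te_t\in I_l\subset I_{N}$ forces (for $t\ge N$) $x\in I_N$, so on $\overline{I_N}$ only the terms with $t<N$ survive, and $\int_{\overline{I_N}}|K^\psi_{M_l}|\,d\mu\le \sum_{t=0}^{N-1}(M_t/c_m)\cdot(c/M_t)\cdot(\text{measure factor }M_t/M_l)\le cN/M_l$... the key point is the extra factor $M_t/M_l$ (the conditional measure of $\{x-x_te_t\in I_l\}$ shrinks as $l\to\infty$), giving $M_l\int_{\overline{I_N}}|K^\psi_{M_l}|\le cN$ — I need to track this to see it is actually $o(1)\cdot$ something or at least controlled. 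Then in $n|K^\psi_n|\le c\sum_{l=0}^{|n|}M_l|K^\psi_{M_l}|$, the terms $l\le N$ contribute at most $cN\cdot C$ (a constant in $n$), so after dividing by $n$ they vanish as $n\to\infty$; the terms $N<l\le|n|$ contribute $\le c\sum_{l>N} M_l\cdot(cN/M_l)$... — \textbf{this is the delicate point}: I must instead exploit that for $l>N$ the set $\{x\in\overline{I_N}: M_lK^\psi_{M_l}(x)\ne 0\}$ involves only $t<N$ and carries measure $\le c/M_l$ per piece, so $M_l\int_{\overline{I_N}}|K^\psi_{M_l}|\,d\mu\le cN M_t/M_l\to$ summable, ultimately yielding $\frac1n\sum_{l=N+1}^{|n|}M_l\int_{\overline{I_N}}|K^\psi_{M_l}|\le \frac{c_N}{n}\cdot(\text{number of terms or }M_{|n|})\to 0$. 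The main obstacle is precisely this bookkeeping: correctly computing the conditional measures of the pieces $I_N^{k,l}$ (for which the decomposition \eqref{2} is tailor-made) and verifying that the surviving contribution over $\overline{I_N}$ is genuinely $o(n)$ rather than $O(n)$; one argues that as $n\to\infty$ the "mass" of $K^\psi_n$ concentrates on $I_N$, so the tail on $\overline{I_N}$ is negligible after averaging. I would organize this last step by splitting the sum in \eqref{star2} at $l=N$ and handling the two ranges as above.
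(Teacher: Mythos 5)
Your handling of \eqref{fn40} and \eqref{fn4} is correct and follows the same route as the paper: reduce $K^\psi_n$ to the kernels $K^\psi_{M_l}$ via \eqref{star2} and use the explicit formula \eqref{star1}; in fact you give more detail for the bound $\int_{G_m}\vert K^\psi_{M_l}\vert d\mu\le C$ than the paper, which merely asserts it. The genuine problem is \eqref{fn400}, where your argument stops at exactly the decisive step and, as written, does not prove the claim. Your estimates in the range $l>N$ are left inconsistent (you state both $\int_{\overline{I_N}}\vert K^\psi_{M_l}\vert d\mu\le cN/M_l$ and a bound with an extra factor $M_t/M_l$, without fixing either), and your concluding bound ``$\frac{c_N}{n}\cdot(\text{number of terms or }M_{\vert n\vert})$'' fails if one takes the factor $M_{\vert n\vert}$: since $n\ge M_{\vert n\vert}$, that only yields boundedness, i.e.\ a restatement of \eqref{fn4}, not the convergence to $0$ required in \eqref{fn400}. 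You flag this yourself (``I need to track this''), so the third assertion is not established in the proposal.

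The gap does close inside your own framework, and doing so reproduces the paper's computation. For $l>N$, formula \eqref{star1} shows that on $\overline{I_N}$ only the pieces with $t<N$ survive; each such piece $\{x\in I_t\setminus I_{t+1}:\ x-x_te_t\in I_l\}$ has measure $(m_t-1)/M_l$ and on it $\vert K^\psi_{M_l}\vert\le c M_{t+1}$ (the denominator $\vert 1-r_t\vert$ is bounded below by $c/m_t$), so $\int_{\overline{I_N}}\vert K^\psi_{M_l}\vert d\mu\le cM_N/M_l$, hence $M_l\int_{\overline{I_N}}\vert K^\psi_{M_l}\vert d\mu\le cM_N$ uniformly in $l$. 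Feeding this into \eqref{star2} gives for the tail $\frac1n\sum_{l=N+1}^{\vert n\vert}M_l\int_{\overline{I_N}}\vert K^\psi_{M_l}\vert d\mu\le cM_N(\vert n\vert-N)/M_{\vert n\vert}\le c(\vert n\vert-N)2^{-(\vert n\vert-N)}\to0$, while the range $l\le N$ contributes at most $cM_N/n\to0$; together this is \eqref{fn400}. This is in substance the same argument the paper carries out, only organized differently: the paper partitions $\overline{I_N}$ by \eqref{2}, derives the pointwise bounds \eqref{kn} and \eqref{11110T111} on each piece $I_N^{k,l}$ and $I_N^{k,N}$, and integrates, ending with exactly the decay factors $2^{-(\vert n\vert-N)/2}$ and $(\vert n\vert-N)2^{-(\vert n\vert-N)}$; your version splits the sum in \eqref{star2} at $l=N$ instead. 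With the corrected measure bookkeeping for $l>N$ inserted, your proof is complete and essentially coincides with the paper's.
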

\begin{proof}
	According orthonormality of Vilenkin systems we immediately get the proof of \eqref{fn40}. It is easy to prove that $$\int_{G_m}\left\vert K^\psi_{M_n}(x)\right\vert d\mu(x)\leq c<\infty.$$
	By combining \eqref{star1} and \eqref{star2} we can conclude that
	\begin{eqnarray*}
		\int_{G_m}\left\vert K^\psi_n\left(x\right)\right\vert d\mu\left(x\right)
		&\leq& \frac{1}{n}\sum_{l=0}^{\left\vert n\right\vert} M_{l}\int_{G_{m}}\left\vert K^\psi_{M_l}\left(x\right)\right\vert d\mu
		\left(x\right) \leq  \frac{1}{n}\sum_{l=0}^{\left\vert n\right\vert}M_l<c<\infty,
	\end{eqnarray*}
	so also \eqref{fn4} is proved.
	
	Let $x\in I_N^{k,l}, \ \ k=0,\dots,N-2, \ \ l=k+1,\dots ,N-1.$ By using again \eqref{star1} and \ref{star2} we get that
	\begin{eqnarray} \label{kn}
	\left\vert K^\psi_n\left(x\right)\right\vert \leq \frac{c}{n}\sum_{s=0}^{l}M_s\left\vert K^\psi_{M_s}\left(x\right)\right\vert 
	\leq \frac{c}{n}\sum_{s=0}^{l}M_sM_k\leq \frac{cM_lM_k}{n}. 
	\end{eqnarray}
	
	Let $x\in I_N^{k,N}$, where $x\in I_{q+1}^{k,q}$, for some $N\leq q <\vert n\vert,$ i.e.,
	$$x=\left(x_0=0,\ldots,x_{k-1}=0,x_k\neq 0,\ldots,x_{N-1}=0,x_q\neq 0,x_{q+1}=0,\ldots,x_{\left\vert n\right\vert-1},\ldots\right), $$
	then
	\begin{eqnarray} \label{11110T1}
	\left\vert K^\psi_n\left(x\right)\right\vert
	\leq\frac{c}{n}\underset{i=0}{\overset{q-1}{\sum}} M_iM_k\leq\frac{cM_kM_q}{n}.
	\end{eqnarray}
	
	Let  $x\in I_{\vert n\vert}^{k,\vert n\vert}\subset I_N^{k,N}$, i.e., $$x=\left(x_0=0,\ldots,x_{m-1}=0,x_k\neq 0,x_{k+1}=0,\ldots,x_N=0,\ldots, x_{\left\vert n\right\vert-1}=0,\ldots\right),$$ then
	\begin{eqnarray} \label{11111T1}
	&&\left\vert K^\psi_n\left(x\right)\right\vert
	\leq \frac{c}{n}\overset{\left\vert n\right\vert-1} {\underset{i=0}{\sum }}M_iM_k
	\leq \frac{cM_kM_{\left\vert n\right\vert}}{n}.
	\end{eqnarray}
	
	If we combine \eqref{11110T1} and \eqref{11111T1} we can conclude that
	\begin{eqnarray}\label{11110T111}
	\int_{I_{N}^{k,N}}\left\vert K^\psi_n \right\vert d\mu
	&=&\overset{\vert n\vert-1}{\underset{q=N}{\sum }}
	\int_{I_{q+1}^{k,q}}\left\vert K^\psi_n \right\vert d\mu
	+\int_{I_{\vert n\vert}^{k,\vert n\vert}}\left\vert K^\psi_n \right\vert d\mu \\ \notag
	&\leq&\overset{\vert n\vert-1}{\underset{q=N}{\sum }}\frac{cM_k}{n}+\frac{cM_k}{n}\\
	&\leq& \frac{c(\vert n\vert-N)M_k}{M_{\vert n \vert}}.
	\end{eqnarray}
	
	Hence, if we apply \eqref{2}, \eqref{kn} and \eqref{11110T111} we find that
	\begin{eqnarray*}
		&&\int_{\overline{I_N}}\left\vert K^\psi_n\right\vert d\mu\\
		&=&\overset{N-2}{\underset{k=0}{\sum }}\overset{N-1}{\underset{l=k+1}{\sum }}
		\sum\limits_{x_{j}=0,j\in\{l+1,...,N-1\}}^{m_{j-1}}\int_{I_{N}^{k,l}}\left\vert K^\psi_n \right\vert d\mu 
		+\overset{N-1}{\underset{k=0}{\sum}} \int_{I_{N}^{k,N}}\left\vert K^\psi_n\right\vert d\mu\\
		&\leq& c\overset{N-2}{\underset{k=0}{\sum }}\overset{N-1}{\underset{l=k+1}{\sum }}\frac{m_{l+1}...m_{N-1}}{M_{N}}
		\frac{cM_lM_k}{n}
		+c\overset{N-1}{\underset{k=0}{\sum }}(\vert n\vert-N)M_k\frac{1}{M_{\vert n \vert}}\\
		&:=&I+II.
	\end{eqnarray*}
	It is evident that
	\begin{eqnarray*}
		I&=&\overset{N-2}{\underset{k=0}{\sum }}\overset{N-1}{\underset{l=k+1}{\sum }}
		\frac{M_k}{M_{\vert n \vert}} \leq c\overset{N-2}{\underset{k=0}{\sum }}
		\frac{(N-k)M_k}{M_{\vert n \vert}}\\
		&\leq& c\overset{N-2}{\underset{k=0}{\sum }}
		\frac{\vert n \vert-k}{2^{\vert n \vert-k}}
		= c\overset{N-2}{\underset{k=0}{\sum }}
		\frac{\vert n \vert-k}{2^{(\vert n \vert-k)/2}}\frac{1}{2^{(\vert n \vert-k)/2}}\\
		&\leq& \frac{c}{2^{(\vert n \vert-N)/2}}\overset{N-2}{\underset{k=0}{\sum }}
		\frac{\vert n \vert-k}{2^{(\vert n \vert-k)/2}}\leq \frac{C}{2^{(\vert n \vert-N)/2}}\to 0, \ \ \text{as} \ \ n\to \infty.
	\end{eqnarray*}
	Analogously, we see that
	\begin{equation*}
	II\leq \frac{c(\vert n\vert-N)}{2^{\vert n \vert-N}}\to 0, \ \ \text{as} \ \ n\to \infty,
	\end{equation*}%
	so also \eqref{fn400} holds and the proof is complete.
	
\end{proof}

The next lemma is very important  to prove almost everywhere convergence of Vilenkin-Fej\'er means:

\begin{lemma}\label{lemma7kncc} Let $n\in \mathbb{N}.$ Then	
	\begin{eqnarray*}
		&& \int_{\overline{I_N}}\sup_{n>M_N}\left\vert K^\psi_n\right\vert d\mu\leq C<\infty,\notag
	\end{eqnarray*}
	where $C$ is an absolute constant.
\end{lemma}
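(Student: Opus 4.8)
The plan is to estimate the maximal function $\sup_{n>M_N}|K^\psi_n|$ by reusing the pointwise bounds for $|K^\psi_n(x)|$ that were already derived in the proof of Lemma \ref{lemma7kn}, but now taking the supremum over all $n>M_N$ \emph{before} integrating, and then summing the resulting geometric-type series. Since $|K^\psi_n|\le \frac{c}{n}\sum_{l=0}^{|n|}M_l|K^\psi_{M_l}|$ by \eqref{star2}, and since the sets $I_N^{k,l}$, $I_N^{k,N}$ decompose $\overline{I_N}$ via \eqref{2}, it suffices to bound $\int_{I_N^{k,l}}\sup_{n>M_N}|K^\psi_n|\,d\mu$ on each piece. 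On $I_N^{k,l}$ with $k<l<N$, inequality \eqref{kn} gives $|K^\psi_n(x)|\le cM_lM_k/n\le cM_lM_k/M_{|n|}\le cM_lM_k/M_N$ for every $n>M_N$ (so $|n|\ge N$), hence $\sup_{n>M_N}|K^\psi_n|\le cM_lM_k/M_N$ on this set, which is \emph{independent of $n$} — the supremum costs nothing here.

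Next I would integrate these $n$-free bounds over the decomposition \eqref{2}, exactly mirroring the computation of the term $I$ at the end of the proof of Lemma \ref{lemma7kn}. Summing $\sum_{k=0}^{N-2}\sum_{l=k+1}^{N-1}\frac{m_{l+1}\cdots m_{N-1}}{M_N}\cdot\frac{cM_lM_k}{M_N}$ collapses, using $m_{l+1}\cdots m_{N-1}=M_N/M_{l+1}$, to a sum of the form $c\sum_{k=0}^{N-2}(N-k)M_k/M_N$, which is bounded by an absolute constant because $M_k/M_N\le 2^{-(N-k)}$ for bounded Vilenkin systems and $\sum_{j\ge 1} j\,2^{-j}<\infty$. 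The crucial point, and the main obstacle, is the piece $\overline{I_N}$ coming from the coordinates $x_k\neq 0$ with the ``tail'' blocks $I_N^{k,N}$: here the pointwise bound $|K^\psi_n(x)|\le cM_kM_q/n$ on $I_{q+1}^{k,q}$ (estimate \eqref{11110T1}) does depend on $n$ through $|n|$ and the location $q$ of the highest nonzero digit. I would handle this by fixing the sub-block $I_{q+1}^{k,q}$ (with $N\le q$) and observing that for $x$ in that sub-block, any $n>M_N$ contributing a nonzero kernel value must have $|n|\ge q$, so $\sup_{n>M_N}|K^\psi_n(x)|\le cM_kM_q/M_q=cM_k$ on $I_{q+1}^{k,q}$, a bound uniform in $n$ on that sub-block.

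Then I would sum over the sub-blocks: $\int_{I_N^{k,N}}\sup_{n>M_N}|K^\psi_n|\,d\mu \le \sum_{q=N}^{\infty} cM_k\cdot \mu(I_{q+1}^{k,q})$, and since $\mu(I_{q+1}^{k,q})\le c/M_{q+1}$, this is $\le cM_k\sum_{q=N}^{\infty}1/M_{q+1}\le cM_k/M_N$ (geometric tail, bounded Vilenkin). Finally $\sum_{k=0}^{N-1}cM_k/M_N\le c\sum_{k=0}^{N-1}2^{-(N-k)}\le C$, an absolute constant. Combining the two parts yields $\int_{\overline{I_N}}\sup_{n>M_N}|K^\psi_n|\,d\mu\le C$, completing the proof. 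The only subtlety to get right is the bookkeeping that, on each fixed sub-block of the decomposition, the digit structure of $x$ forces $|n|$ to be large enough that the $n$-dependent factors in the pointwise estimates are absorbed, turning the supremum into a harmless operation block-by-block.
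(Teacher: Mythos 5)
Your argument follows the paper's proof almost verbatim: on $I_N^{k,l}$ with $k<l<N$ you use \eqref{kn} together with $n>M_N$ to get the $n$-free bound $cM_lM_k/M_N$, on $I_N^{k,N}$ you arrive at the bound $cM_k$, and then you integrate over the decomposition \eqref{2}, summing exactly the same series $\sum_{k}(N-k)M_k/M_N+\sum_k M_k/M_N\le C$ as the paper does. The one step you should repair is the assertion that, for $x\in I_{q+1}^{k,q}$, every $n>M_N$ with $K^\psi_n(x)\neq 0$ must satisfy $|n|\ge q$: this is false, since by \eqref{star1} we have $K^\psi_{M_l}(x)=M_k/(1-r_k(x))\neq 0$ for every $l$ with $k<l\le q$, so for instance $n=M_{N+1}>M_N$ gives a nonzero kernel value with $|n|=N+1<q$ whenever $q\ge N+2$. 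The estimate you want survives without that claim: if $|n|\le q$, the vanishing digits $x_{k+1}=\dots=x_{q-1}=0$ together with \eqref{star1} and \eqref{star2} give $|K^\psi_n(x)|\le \frac{c}{n}\sum_{i=0}^{|n|}M_iM_k\le cM_kM_{|n|}/n\le cM_k$, while for $|n|>q$ your appeal to \eqref{11110T1} applies; hence $\sup_{n>M_N}|K^\psi_n|\le cM_k$ on all of $I_N^{k,N}$, which is precisely the bound the paper uses directly (it then integrates over $I_N^{k,N}$ in one stroke, using $\mu(I_N^{k,N})\le c/M_N$, so your finer decomposition into the blocks $I_{q+1}^{k,q}$ is harmless but unnecessary). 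With this correction your proof is complete and coincides with the paper's.
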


\begin{proof}
	Let $n>M_N$ and $x\in I_N^{k,l}, \ \ k=0,\dots,N-2, \ \ l=k+1,\dots ,N-1.$ By using  \eqref{kn} in the proof of Lemma \ref{lemma7kn} we get that
	\begin{eqnarray} \label{knmax1}
	\sup_{n>M_N}\left\vert K^\psi_n\left(x\right)\right\vert\leq \frac{cM_lM_k}{M_N}.
	\end{eqnarray}
	Let $n>M_N$ and $x\in I_N^{k,N}$. Then, by using \eqref{star1} we find that
$ \left\vert K^\psi_n\left(x\right)\right\vert
	\leq cM_k$
	so that
	\begin{eqnarray} \label{knmax2}
	\sup_{n>M_N}\left\vert K^\psi_n (x)\right\vert
	\leq cM_k.
	\end{eqnarray}
	Hence, if we apply \eqref{2} we get that
	\begin{eqnarray} \label{knmax3}
	&&\int_{\overline{I_N}}\sup_{n>M_N}\left\vert K^\psi_n\right\vert d\mu\\ \notag
	&=&\overset{N-2}{\underset{k=0}{\sum }}\overset{N-1}{\underset{l=k+1}{\sum }}
	\sum\limits_{x_{j}=0,j\in\{l+1,...,N-1\}}^{m_{j-1}}\int_{I_{N}^{k,l}}\sup_{n>M_N}\left\vert K^\psi_n \right\vert d\mu \\ \notag
	&+&\overset{N-1}{\underset{k=0}{\sum }}\int_{I_{N}^{k,N}}\sup_{n>M_N}\left\vert K^\psi_n \right\vert d\mu\\ \notag
	&\leq& c\overset{N-2}{\underset{k=0}{\sum }}\overset{N-1}{\underset{l=k+1}{\sum }}\frac{m_{l+1}...m_{N-1}}{M_{N}}
	\frac{M_lM_k}{M_N}+c\overset{N-1}{\underset{k=0}{\sum }}\frac{M_k}{M_N}\\ \notag
	&\leq& \overset{N-2}{\underset{k=0}{\sum }}
	\frac{(N-k)M_k}{M_N}+c<C<\infty.
	\end{eqnarray}
	
	The proof is complete.	
\end{proof}

\section{Fejér means with respect to the trigonometric  system }

If we consider the  Fej\'{e}r kernels with respect to the trigonometric system $\{(1/2\pi){e^{inx}},n=0,\pm1,\pm 2,...\},$ for $x\in[-\pi,\pi]$ we have that  $K^T_n (x)\geq 0$ and 
$$
K^T_n(x)=\frac{1}{n}\left(\frac{\sin((n x)/2)}{\sin(x/2)}\right)^2.
$$
Moreover, Fejér kernel  $K^T_n  (n\in \mathbb{N_+} )$ with respect to trigonometric system has  upper envelope 
\begin{eqnarray}\label{trkn}
0\leq K^T_n(x)\leq \min(n,\pi{(n \vert x\vert^2)}^{-1}).
\end{eqnarray}
\begin{figure}[htbp]
	\centerline{\includegraphics[scale=0.96]{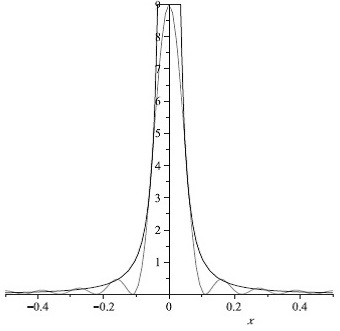}}
	\caption{Fej\'{e}r kernel and the upper envelope $\min(n,\pi{(n \vert x\vert^2)}^{-1})$}
	\label{f1}
\end{figure}

It also follows that every Fejér kernels have one integrable upper envelope:
$$\sup_{n\in\mathbb{N}}K^T_n(x)\leq \pi \vert x\vert^{-2}.$$
	
\begin{lemma}\label{lemma7kn1}
	Let $n\in \mathbb{N}.$ Then, for any $n,N\in \mathbb{N_+}$, we have that
	\begin{eqnarray} \label{fn401}
	&&\int_{[-\pi,\pi]}\left\vert K^T_n(x)\right\vert d\mu(x)=\int_{[-\pi,\pi]} K^T_n (x)d\mu(x)=1,\\
	&& \label{fn4001}
	\int_{[-\pi,\pi] \backslash [-\varepsilon,\varepsilon]}\left\vert K^T_n(x)\right\vert d\mu (x)\rightarrow  0, \ \ \text{as} \ \ n\rightarrow  \infty,  \ \ \text{for any} \ \ \varepsilon>0.
	\end{eqnarray}
	Moreover,
	\begin{eqnarray}\label{1.73app4}
	\lim_{n\to \infty}\sup_{[-\pi,\pi] \backslash [-\varepsilon,\varepsilon]}\left\vert K^T_n(x)\right\vert =0, \ \ \text{for any} \ \ \varepsilon>0,
	\end{eqnarray}
\end{lemma}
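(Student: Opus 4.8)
The plan is to derive all three statements directly from the explicit formula
$$K^T_n(x)=\frac{1}{n}\left(\frac{\sin(nx/2)}{\sin(x/2)}\right)^2$$
and the envelope bound \eqref{trkn}, namely $0\le K^T_n(x)\le \min\bigl(n,\pi(n|x|^2)^{-1}\bigr)$. First, for \eqref{fn401}: since $K^T_n\ge 0$ the two integrals coincide trivially, and the value $1$ follows because $K^T_n=\frac1n\sum_{k=0}^{n-1}D^T_k$ where $D^T_k$ is the trigonometric Dirichlet kernel with $\int_{[-\pi,\pi]}D^T_k\,d\mu=1$ for every $k$ (equivalently, $\widehat{K^T_n}(0)=1$ by orthonormality of the trigonometric system, exactly as in the proof of \eqref{fn40} in Lemma \ref{lemma7kn}). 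So \eqref{fn401} is immediate.

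For \eqref{fn4001} and \eqref{1.73app4} the idea is that off a fixed neighbourhood of the origin the envelope decays like $1/n$ uniformly. Concretely, for $x\in[-\pi,\pi]\setminus[-\varepsilon,\varepsilon]$ we have $|x|\ge\varepsilon$, so
$$0\le K^T_n(x)\le \frac{\pi}{n|x|^2}\le\frac{\pi}{n\varepsilon^2}.$$
This already gives \eqref{1.73app4}: taking the supremum over $x$ in that set yields $\sup_{[-\pi,\pi]\setminus[-\varepsilon,\varepsilon]}|K^T_n(x)|\le \pi/(n\varepsilon^2)\to 0$ as $n\to\infty$. For \eqref{fn4001} we integrate the same pointwise bound over a set of measure at most $2\pi$ (the normalized measure $\mu$ on $[-\pi,\pi]$), obtaining
$$\int_{[-\pi,\pi]\setminus[-\varepsilon,\varepsilon]}|K^T_n(x)|\,d\mu(x)\le \frac{\pi}{n\varepsilon^2}\cdot\mu\bigl([-\pi,\pi]\bigr)\le\frac{C}{n\varepsilon^2}\to 0$$
as $n\to\infty$, with $\varepsilon$ fixed. (If one prefers not to use a crude majorant, one can instead integrate $\pi(n|x|^2)^{-1}$ over $\varepsilon\le|x|\le\pi$ to get a bound of order $1/(n\varepsilon)$, which is equally sufficient.) This completes the proof.

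There is no real obstacle here; the only point requiring a little care is bookkeeping about which normalization of Lebesgue measure is denoted $\mu$ on $[-\pi,\pi]$ — whether it carries the $1/(2\pi)$ factor consistent with the trigonometric system $\{(2\pi)^{-1}e^{inx}\}$ — but since every estimate ends in a quantity tending to $0$, the precise constant is immaterial. Everything follows from the nonnegativity of $K^T_n$, the normalization $\int K^T_n\,d\mu=1$, and the elementary envelope \eqref{trkn}; in particular no delicate kernel decomposition of the type needed for the Vilenkin case (Lemmas \ref{lemma7kn} and \ref{lemma7kncc}) is required, which reflects the fact that the trigonometric Fej\'er kernel is already pointwise dominated by a single integrable function $\pi|x|^{-2}$.
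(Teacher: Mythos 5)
Your argument is correct and follows exactly the route the paper intends: \eqref{fn401} from nonnegativity and the normalization $\widehat{K^T_n}(0)=1$, and \eqref{fn4001}, \eqref{1.73app4} from the envelope bound \eqref{trkn}; the paper simply omits the elementary estimates $K^T_n(x)\le \pi/(n\varepsilon^2)$ on $|x|\ge\varepsilon$ that you write out. No differences of substance, and your remark that the normalization of $\mu$ is immaterial is accurate.
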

\begin{proof}
	According the property $K^T_n (x)\geq 0$ and orthonormality of trigonometric system we immediately get the proof of \eqref{fn401}.  
 On the other hand, \eqref{fn4001} and \eqref{1.73app4} follow estimate \eqref{trkn} so we leave out the details. 
\end{proof}

\section{Approximate Identity}\label{s2,5}

The properties established in Lemma \ref{lemma7kn} and Lemma \ref{lemma7kn1} ensure that kernel of the Fejér means $\{K^w_N\}_{N=1}^\infty \ \left( w=\psi \  \text{or} \  w=T\right),$ with respect to  Vilenkin and trigonometric systems form what is called an approximation identity. To unify the proofs for trigonometric and Vilenkin systems we mean that $I$ denotes $G_m$ or $[-\pi,\pi]$ and  $I_N$ denotes $I_N(0)$ or $[-1/ {2^N},1/{2^N}],$ for  $N\in \mathbb{N_+}.$

\begin{definition}
	The family $\{\Phi_n \}^{\infty}_{n=1}\subset L_{\infty}(I)$ forms an approximate identity provided that
	\begin{eqnarray*}
		&(A1)& \ \ \ \int_{I}\Phi_n(x)d(x)=1
		\label{1.71app}\\
		&(A2)& \ \ \ \sup_{n\in \mathbb{N}}\int_{I}\left\vert \Phi_n(x)\right\vert d\mu(x)<\infty\label{1.72app} \\
		&(A3)& \ \ \ \int_{I \backslash I_N}\left\vert \Phi_n(x)\right\vert d\mu (x)\rightarrow  0, \ \ \text{as} \ \ n\rightarrow  \infty, \ \ \text{for any} \ \ N\in \mathbb{N_+}.
		\label{1.73app}
	\end{eqnarray*}
\end{definition}
The term "approximate identity" is used because of the fact that 
$\Phi_n\ast f \to f \  \text{ as } \  n\to \infty$ in any reasonable sense.

Next, we prove important result, which will be used to obtain norm convergence of some well-known and general summability methods:

\begin{theorem} \label{theoremconv}
	Let $f\in L_p(I),$ where $1\leq p< \infty$ and the family 
	$\{\Phi_n \}^{\infty}_{n=1}\subset L_{\infty}(I)$
	forms an approximate identity. Then
	$$\Vert \Phi_n\ast f - f \Vert_p \to 0 \ \ \text{ as } \ \ n\to \infty.$$
\end{theorem}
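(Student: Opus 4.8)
The plan is to use the standard ``approximate identity'' argument, splitting the difference $\Phi_n*f-f$ into a part supported near the origin (which is small because of the $L_p$-continuity of translation) and a part supported away from the origin (which is small by property $(A3)$). First, using $(A1)$ I would write
$$
\Phi_n*f(x)-f(x)=\int_I \left(f(x-t)-f(x)\right)\Phi_n(t)\,d\mu(t),
$$
and then apply Minkowski's integral inequality to obtain
$$
\left\Vert \Phi_n*f-f\right\Vert_p\le \int_I \left\Vert f(\cdot-t)-f\right\Vert_p\left\vert \Phi_n(t)\right\vert\,d\mu(t).
$$
Fix $\varepsilon>0$. By the continuity of translation in $L_p(I)$ for $1\le p<\infty$, there is $N\in\mathbb{N}_+$ such that $\left\Vert f(\cdot-t)-f\right\Vert_p<\varepsilon$ whenever $t\in I_N$; here one must note that for $t\in I_N$ the translate $x\mapsto x-t$ moves points only within a small neighbourhood of the identity (in $G_m$ this means $t_0=\dots=t_{N-1}=0$, in $[-\pi,\pi]$ it means $|t|\le 2^{-N}$), which is exactly the modulus-of-continuity regime.

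Next I would split the integral over $I_N$ and over $I\setminus I_N$. On $I_N$ the bound is
$$
\int_{I_N}\left\Vert f(\cdot-t)-f\right\Vert_p\left\vert \Phi_n(t)\right\vert\,d\mu(t)\le \varepsilon \sup_{n}\int_I\left\vert \Phi_n(t)\right\vert\,d\mu(t)\le C\varepsilon,
$$
using $(A2)$. On $I\setminus I_N$ I would use the crude bound $\left\Vert f(\cdot-t)-f\right\Vert_p\le 2\left\Vert f\right\Vert_p$, giving
$$
\int_{I\setminus I_N}\left\Vert f(\cdot-t)-f\right\Vert_p\left\vert \Phi_n(t)\right\vert\,d\mu(t)\le 2\left\Vert f\right\Vert_p\int_{I\setminus I_N}\left\vert \Phi_n(t)\right\vert\,d\mu(t),
$$
and by $(A3)$ this last integral tends to $0$ as $n\to\infty$, so it is $<\varepsilon$ for $n$ large enough (depending on the fixed $N$). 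Combining, $\left\Vert \Phi_n*f-f\right\Vert_p\le (C+1)\varepsilon$ for all large $n$, which proves the claim since $\varepsilon$ was arbitrary.

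The main technical point — the one place where some care is needed rather than routine estimation — is the justification that translation is continuous in $L_p$ on the group $I$, together with the matching of the ``scale'' $N$ in the modulus of continuity with the set $I_N$ appearing in $(A3)$. On $[-\pi,\pi]$ this is the classical fact that $\lim_{t\to 0}\left\Vert f(\cdot-t)-f\right\Vert_p=0$, proved by density of continuous functions; on the Vilenkin group $G_m$ the analogous statement is that $\left\Vert f(\cdot\dot{-}t)-f\right\Vert_p\to 0$ as $t\to 0$ in $G_m$, proved by density of Vilenkin polynomials (or of step functions constant on the cosets of $I_k$), and the key observation is that $t\in I_N$ forces the translate to agree with $f$ up to an error controlled by the $I_N$-oscillation. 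One should also check that Minkowski's integral inequality applies, i.e. that $(x,t)\mapsto (f(x-t)-f(x))\Phi_n(t)$ is jointly measurable and the iterated integral is finite, which is immediate since $\Phi_n\in L_\infty$ and $f\in L_p\subset L_1$. Everything else is the three-$\varepsilon$ bookkeeping indicated above.
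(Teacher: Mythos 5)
Your argument is correct and is essentially the same as the paper's: decompose via $(A1)$, apply Minkowski's integral inequality, split the integral over $I_N$ and $I\setminus I_N$, and use translation continuity in $L_p$ together with $(A2)$ and $(A3)$. Your added remarks on justifying $L_p$-continuity of translation and the applicability of Minkowski's inequality only make explicit what the paper leaves implicit.
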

\begin{proof} Let $\varepsilon>0.$ By using continuity of $L_p$ norm and  $(A2)$ condition we get that	
	$$\sup_{t\in I_N}\Vert f(x-t)-f(x)\Vert_p\sup_{n\in \mathbb{N}}\Vert\Phi_n\Vert_1<\varepsilon/2.$$	
	By now applying Minkowski's integral inequality and $(A1)$ and $(A3)$ conditions we find that
	\begin{eqnarray*}
		\Vert\Phi_n\ast f-f\Vert_p
		&=&\left\Vert \int_{I}\Phi_n(t)(f(x-t)-f(x))d\mu(t)\right\Vert_p\\
		&\leq&\int_{I}\left\vert \Phi_n(t)\right\vert \left\Vert f(x-t)-f(x)\right\Vert_p d\mu(t)\\
		&=&\int_{I_N}\left\vert \Phi_n(t)\right\vert \left\Vert f(x-t)-f(x)\right\Vert_p d\mu(t)\\
		&+&\int_{I\setminus I_N}\left\vert \Phi_n(t)\right\vert \left\Vert f(x-t)-f(x)\right\Vert_p d\mu(t)\\
		&\leq&\sup_{t\in I_N}\Vert f(x-t)-f(x)\Vert_p\sup_{n\in \mathbb{N}}\left\Vert\Phi_n\right\Vert_1\\
		&+&\sup_{t\in I}\left\Vert f(x-t)-f(x)\right\Vert_p\int_{I\setminus I_N}\left\vert \Phi_n(t)\right\vert d\mu(t) < \varepsilon/2+\varepsilon/2<\varepsilon.
	\end{eqnarray*}
	The proof is complete.
\end{proof}

According to Lemma \ref{lemma7kn} and Lemma \ref{lemma7kn1} we immediately get that the following results holds true:
\begin{corollary}
	Let $f\in L_p(I),$ where $1\leq p< \infty.$ Then
	
	\begin{equation*}
	\left\Vert \sigma^w_nf-f\right\Vert_p\to 0, \ \ \text{as} \ \ n\to\infty, \text{ }\ \ \left( w=\psi \ \ \text{or} \ \ w=T \right).
	\end{equation*}
where $\sigma_n^\psi$ and $\sigma_n^T$ are Vilenkin-Fejér and trigonometric-Fejér means, respectively.
\end{corollary}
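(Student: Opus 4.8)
The plan is to recognize the Fej\'er mean as a convolution against the Fej\'er kernel and then invoke Theorem \ref{theoremconv}. First I would record the elementary identity $\sigma_n^w f = K_n^w \ast f$ for $w=\psi$ and $w=T$, where $\Phi\ast f(x)=\int_I \Phi(t)f(x-t)\,d\mu(t)$. This follows from $S_k^w f = D_k^w\ast f$ (a restatement of the definition of partial sums via the Dirichlet kernel, using $\widehat{g\ast h}=\widehat g\,\widehat h$ on the respective orthonormal system) and averaging: $\sigma_n^w f = \frac1n\sum_{k=0}^{n-1}D_k^w\ast f = \big(\frac1n\sum_{k=0}^{n-1}D_k^w\big)\ast f = K_n^w\ast f$. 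Since $K_n^w$ is a finite linear combination of Vilenkin (resp.\ trigonometric) functions, it lies in $L_\infty(I)$, so $\{K_n^w\}_{n=1}^\infty$ is a legitimate candidate for an approximate identity in the sense of the Definition in Section \ref{s2,5}.

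Next I would check the three axioms (A1)--(A3) for $\Phi_n=K_n^w$, which is exactly the content of Lemma \ref{lemma7kn} and Lemma \ref{lemma7kn1}. For the Vilenkin case ($I=G_m$, $I_N=I_N(0)$): (A1) is \eqref{fn40}, (A2) is \eqref{fn4}, and (A3) is \eqref{fn400}. For the trigonometric case ($I=[-\pi,\pi]$, $I_N=[-1/2^N,1/2^N]$): (A1) is \eqref{fn401}; (A2) holds because $K_n^T\ge 0$ gives $\int_{[-\pi,\pi]}|K_n^T|\,d\mu=\int_{[-\pi,\pi]}K_n^T\,d\mu=1$ by \eqref{fn401}, so the supremum over $n$ equals $1$; and (A3) follows from \eqref{fn4001} of Lemma \ref{lemma7kn1} applied with $\varepsilon=1/2^N$, which is exactly the set $[-\pi,\pi]\setminus I_N$ appearing in (A3). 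Hence in both cases $\{K_n^w\}_{n=1}^\infty$ forms an approximate identity.

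Finally, applying Theorem \ref{theoremconv} with $\Phi_n=K_n^w$ and any $f\in L_p(I)$, $1\le p<\infty$, yields
$$\|\sigma_n^w f - f\|_p = \|K_n^w\ast f - f\|_p \to 0 \quad\text{as}\quad n\to\infty,$$
which is the assertion. There is essentially no genuine obstacle: the entire substantive work — the kernel estimates (A1)--(A3) and the general norm-convergence Theorem \ref{theoremconv} — has already been carried out earlier in the paper. The only minor point needing a word of care is that the unified notation fixes the exhaustion $I_N=[-1/2^N,1/2^N]$ in the trigonometric setting while Lemma \ref{lemma7kn1} is phrased with arbitrary $\varepsilon$-neighbourhoods; this is reconciled immediately by taking $\varepsilon = 1/2^N$, since these neighbourhoods shrink to $\{0\}$.
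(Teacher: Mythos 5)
Your proposal is correct and follows exactly the paper's route: verify the axioms (A1)--(A3) for $K_n^w$ via Lemma \ref{lemma7kn} and Lemma \ref{lemma7kn1} and then apply Theorem \ref{theoremconv} to $\sigma_n^w f = K_n^w\ast f$. The only difference is that you spell out the convolution identity and the reconciliation $\varepsilon=1/2^N$, details the paper leaves implicit.
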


\begin{theorem}\label{th2}
	Suppose that $f\in L_1(I)$ and that the family 
$\{\Phi_n \}^{\infty}_{n=1}\subset L_{\infty}(I)$ 
forms an approximate identity. In addition, let
	\begin{eqnarray}\label{1.73app40}
		&(A4)& \ \ \ \lim_{n\to \infty}\sup_{I \backslash I_N}\left\vert \Phi_n(x)\right\vert =0, \ \ \text{for any} \ \ N\in \mathbb{N_+},
	\end{eqnarray}
	
	a) If the function $f$ is continuous at $t_0$, then  
	$$\Phi_n\ast f(t_0)\to f(t_0) \ \ \text{ as } \ \ n\to \infty.$$
	
	b) If the functions $\{\Phi_n \}^{\infty}_{n=1}$ are even and the left and right limits $f(t_0-0)$ and $f(t_0+0)$ do exist and are finite, 
	then
	$$\Phi_n\ast f(t_0)  \to L, \ \ \text{ as } \ \ n\to \infty,$$
	where
	\begin{eqnarray}\label{L}
	L=:\frac{f(t_0+0)+f(t_0-0)}{2}.
	\end{eqnarray}
\end{theorem}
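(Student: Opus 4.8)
The plan is to reduce both parts to a single localization estimate. Write $\Phi_n * f(t_0) - L = \int_I \Phi_n(t)\bigl(f(t_0 - t) - L\bigr)\,d\mu(t)$, which is legitimate because condition $(A1)$ guarantees $\int_I \Phi_n(t)\,d\mu(t) = 1$, so $L = \int_I \Phi_n(t)\,L\,d\mu(t)$. In part a) we take $L = f(t_0)$, and in part b) we use that the $\Phi_n$ are even: splitting the integral over the symmetric neighborhood $I_N$ into the halves $t > 0$ and $t < 0$, the change of variables $t \mapsto -t$ combined with evenness of $\Phi_n$ lets us symmetrize and replace $f(t_0 - t) - L$ by $\tfrac12\bigl(f(t_0 - t) + f(t_0 + t)\bigr) - L$, so that the relevant quantity tending to $0$ is $\bigl|\tfrac12(f(t_0-t)+f(t_0+t)) - L\bigr|$ as $t \to 0$ within $I_N$; by the definition of $L$ in \eqref{L} and the existence of the one-sided limits, this is what goes to zero. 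Thus both cases are handled by the same splitting once we record, respectively, that $g(t) := f(t_0 - t) - f(t_0)$ satisfies $|g(t)| \to 0$ as $t \to 0$ (part a, by continuity), or that the symmetrized difference has the same property (part b).

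Next I would split $I = I_N \cup (I \setminus I_N)$ for a parameter $N$ to be chosen, exactly as in the proof of Theorem \ref{theoremconv}, but now estimating pointwise rather than in $L_p$ norm:
\begin{eqnarray*}
\left| \Phi_n * f(t_0) - L \right|
&\leq& \int_{I_N} |\Phi_n(t)|\,|g(t)|\,d\mu(t) + \int_{I \setminus I_N} |\Phi_n(t)|\,|g(t)|\,d\mu(t),
\end{eqnarray*}
where $g$ denotes the (possibly symmetrized) difference above. For the first term, given $\varepsilon > 0$ we use continuity (or existence of one-sided limits) to choose $N$ so large that $|g(t)| < \varepsilon$ for all $t \in I_N$; then the first integral is bounded by $\varepsilon \sup_{n} \|\Phi_n\|_1$, which is finite by $(A2)$. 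For the second term, over the fixed region $I \setminus I_N$ we bound $|g(t)|$ crudely: $|g(t)| \leq \|f\|_1$-type control is not available pointwise, so instead I bound $\int_{I \setminus I_N} |\Phi_n(t)|\,|g(t)|\,d\mu(t) \leq \bigl(\sup_{I \setminus I_N} |\Phi_n|\bigr) \int_{I \setminus I_N} |g(t)|\,d\mu(t) \leq \bigl(\sup_{I \setminus I_N} |\Phi_n|\bigr)\bigl( \|f\|_1 + |L|\,\mu(I) \bigr)$, which is exactly where the new hypothesis $(A4)$ enters: for the fixed $N$ chosen above, $\sup_{I \setminus I_N} |\Phi_n| \to 0$ as $n \to \infty$, so the second term tends to $0$. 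Letting $n \to \infty$ and then $\varepsilon \to 0$ completes the proof.

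The main obstacle — and the reason $(A4)$ is needed rather than just $(A3)$ — is precisely the control of the tail term: since $f$ is only assumed integrable (not bounded), $g$ need not be bounded on $I \setminus I_N$, so we cannot pull $|g|$ out in sup-norm and use $(A3)$; we must pull out $|\Phi_n|$ in sup-norm instead, which requires the uniform decay $(A4)$. A secondary technical point is the symmetrization argument in part b): one must be careful that the change of variables $t \mapsto -t$ is measure-preserving on $I$ (true both for $G_m$ with its Haar measure and for $[-\pi,\pi]$ with Lebesgue measure) and that the decomposition of $I_N$ into its two symmetric halves is valid in both settings, so that evenness of $\Phi_n$ genuinely produces the average $\tfrac12(f(t_0-0)+f(t_0+0))$ in the limit.
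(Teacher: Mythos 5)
Your proposal is correct and follows essentially the same route as the paper: subtract $L$ using $(A1)$, split the integral at $I_N$, control the near part via continuity (or the one-sided limits after symmetrizing with the evenness of $\Phi_n$) together with $(A2)$, and kill the tail using the uniform decay $(A4)$. The only cosmetic difference is that the paper splits the tail into a term with $f(t_0-t)$ (handled by $(A4)$) and a constant term with $f(t_0)$ (handled by $(A3)$), whereas you bound the whole tail at once by $\bigl(\sup_{I\setminus I_N}\vert\Phi_n\vert\bigr)\bigl(\Vert f\Vert_1+\vert L\vert\mu(I)\bigr)$, which is an equally valid (indeed slightly more economical) estimate.
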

\begin{proof}
	It is evident that
	\begin{eqnarray*}
		\left\vert \Phi_n\ast f(t_0)-f(t_0)\right\vert 
		&=& \left\vert\int_{I}\Phi_n(t)(f(t_0-t)-f(t_0))d\mu(t)\right\vert\\
		&\leq&\left\vert\int_{I_N} \Phi_n(t)(f(t_0-t)-f(t_0)) d\mu(t)\right\vert\\
		&+&\left\vert\int_{I\setminus I_N} \Phi_n(t)f(t_0-t) d\mu(t)\right\vert+\left\vert\int_{I\setminus I_N} \Phi_n(t)f(t_0) d\mu(t)\right\vert\\
		&=:&I+II+III.
	\end{eqnarray*}
	Let $f$ be continuous at $t_0$. For any $\varepsilon>0$ there exists $N$ such that
	
	$$I \leq \sup_{t\in I_N}\vert f(t_0+t)-f(t_0))\vert \sup_{n\in \mathbb{N}}\Vert\Phi_n\Vert_1< \varepsilon/2.$$
	Using (A4) condition, we get that
	
	$$II \leq \sup_{t\in I \backslash I_N}\left\vert \Phi_n(t)\right\vert \Vert f\Vert_1 \to 0, \  \ \text{as} \ \ n\to\infty.$$
	We conclude from (A3) that
	$$III \leq \vert f(t_0)\vert \int_{I\setminus I_N} \vert \Phi_n(t)\vert d\mu(t)\to 0, \  \ \text{as} \ \ n\to\infty.$$
	Thus part a) is proved. 
	
	Since functions $\{\Phi_n \}^{\infty}_{n=1}$ are even, for the proof of part b), we first note that
	\begin{eqnarray*}
		&&(\Phi_n\ast f)(t_0)-L\\
		&=&\int_{I} \Phi_n(t)\left(\frac{f(t_0-t)+f(t_0+t)}{2}-\frac{f(t_0-0)+f(t_0+0)}{2}\right) d\mu(t)
	\end{eqnarray*}
	Hence, if we use part a), we immediately get the proof of part b) so the proof is complete.
\end{proof}

\begin{corollary}
	Let $f\in L_1[-\pi,\pi].$ Then
	the following statements holds true:
	
	a) If the function $f$ is continuous at $t_0$, then  $$\sigma^T_n f(t_0)\to f(t_0) \ \ \text{ as } \ \ n\to \infty.$$
	
	b) Let left and right limits $f(t_0-0)$ and $f(t_0+0)$ do exist and are finite.
	Then
	$$\sigma^T_n f(t_0)  \to L\ \ \text{ as } \ \ n\to \infty,$$
where $L$ is defined by \eqref{L}.
\end{corollary}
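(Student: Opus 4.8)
The plan is to derive this corollary directly from Theorem \ref{th2} by checking that the hypotheses apply to the Fej\'er kernels $\{K^T_n\}_{n=1}^\infty$ of the trigonometric system. First I would recall from Lemma \ref{lemma7kn1} that these kernels satisfy $(A1)$ via \eqref{fn401}, that they satisfy $(A2)$ with constant $1$ since $K^T_n\geq 0$ forces $\int_{[-\pi,\pi]}|K^T_n|\,d\mu=\int_{[-\pi,\pi]}K^T_n\,d\mu=1$, that $(A3)$ is exactly \eqref{fn4001}, and that the extra condition $(A4)$ is precisely \eqref{1.73app4}. Thus the family $\{K^T_n\}$ forms an approximate identity satisfying the additional hypothesis of Theorem \ref{th2}.

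Second, I would observe that in the trigonometric setting $\sigma^T_n f = K^T_n \ast f$, which is immediate from the definitions of $\sigma^w_n$, $K^w_n$, and Fourier coefficients at the start of the paper. Hence part a) of the corollary is just part a) of Theorem \ref{th2} applied to $\Phi_n = K^T_n$: if $f$ is continuous at $t_0$, then $\sigma^T_n f(t_0) = (K^T_n\ast f)(t_0)\to f(t_0)$.

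Third, for part b) the one remaining thing to verify is that the Fej\'er kernels $K^T_n$ are even functions of $x$, which is clear from the closed form $K^T_n(x)=\frac{1}{n}\bigl(\frac{\sin(nx/2)}{\sin(x/2)}\bigr)^2$. Given that, part b) of Theorem \ref{th2} yields $\sigma^T_n f(t_0)\to L$ whenever the one-sided limits $f(t_0-0)$ and $f(t_0+0)$ exist and are finite, with $L$ as in \eqref{L}.

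There is essentially no obstacle here; the only mild point of care is making sure the identification $\sigma^T_n f = K^T_n\ast f$ is stated so that the convolution in Theorem \ref{th2} matches the definition of the Fej\'er means, and that $\mu$ on $[-\pi,\pi]$ is the normalized Lebesgue measure so that $(A1)$ reads with total mass $1$ consistently. Once these bookkeeping matters are noted, the corollary follows by direct citation of Lemma \ref{lemma7kn1} and Theorem \ref{th2}.
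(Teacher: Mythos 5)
Your proposal is correct and matches the paper's (implicit) argument: the corollary is obtained by applying Theorem \ref{th2} to $\Phi_n=K^T_n$, with conditions (A1)--(A4) supplied by Lemma \ref{lemma7kn1} and the evenness of $K^T_n$ clear from its closed form. Your explicit verification of $\sigma^T_n f=K^T_n\ast f$ and of evenness is exactly the bookkeeping the paper leaves to the reader.
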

\begin{remark} \label{prop:divkn}
	Conditions (A4) and \eqref{trkn} do not hold for the Vilenkin-Fejér kernels. Indeed, by using \eqref{star1}, for any  $ k\in\mathbb{N }_+$ and for any 
	$e_0\in I_n{(e_{0})} \subset G_m \backslash I_n, \ (n\in\mathbb{N}_+)$
	  we get that 
	\begin{eqnarray*}
		\vert K^\psi_{M_k}(e_0)\vert=\left\vert \frac{M_0}{1-r_0\left(e_0\right) }\right\vert 
		=\left\vert \frac{M_0}{1-\exp\left(2\pi \imath /m_{0}\right) }\right\vert=\frac{1}{2\sin(\pi /m_{0})} \geq \frac{1}{2},
	\end{eqnarray*}
	so that
	\begin{eqnarray*}
		\lim_{k\to \infty}\sup_{I_n{(e_{0})} \subset G_m \backslash I_n}\left\vert K^\psi_{M_k}(x)\right\vert 
		&\geq&\lim_{k\to \infty}\left\vert K^\psi_{M_k}(e_0)\right\vert\\
		&\geq& \frac{1}{2}>0, \ \text{for any} \ n\in \mathbb{N_+}. 
	\end{eqnarray*}

	Hence (A4) and \eqref{trkn} are not true for the Fejér kernels with respect to the Vilenkin system.
	However, in some publications you can find that some researchers use such an estimate (for details see \cite{IV}).	
	
Moreover, for any $x\in I_{k}\backslash I_{k-1}$  we have 
$$\vert K^\psi_{M_k}(x)\vert= \left\vert \frac{M_{k-1}}{1-\exp\left(2\pi \imath /m_{k-1}\right) }\right\vert=\frac{M_{k-1}}{2\sin(\pi /m_{k-1})} \geq \frac{M_k}{2\pi}$$
and it follows that Fejér kernels with respect to Vilenkin system do not have one integrable upper envelope. In particular the following lower estimate holds:
$$\sup_{n\in\mathbb{N}}\vert K^\psi_n(x)\vert\geq (2\pi \lambda \vert x\vert)^{-1}, \ \ \ \text{where} \ \ \  \lambda:=\sup_{n\in\mathbb{N}}m_n.$$
\end{remark}

This remark shows that there is an essential difference between the Vilenkin-Fejér kernels and Fejér kernels with respect to trigonometric system. Moreover, Theorem \ref{th2} is useless to prove almost everywhere convergence of Vilenkin-Fejér means.

\section{Almost Everywhere Convergence of Vilenkin-Fej\'er Means} 

The next theorem is very important to study almost everywhere convergence of the Vilenkin-Fej\'er means:

\begin{theorem}\label{weaktype1}
	Suppose that the sigma sub-linear operator $V$ is bounded from $L_{p_1}$ to $L_{p_{1}}$ for some $1<p_1\leq \infty $ and
	\begin{equation*}
	\int\limits_{\overline{I}}\left\vert Vf\right\vert d\mu \leq C\left\Vert f\right\Vert_1
	\end{equation*}
	for  $f\in L_1(G_m)$ and Vilenkin interval $I\subset G_m$ which satisfy
	\begin{equation}\label{atom}
	\text{supp} f\subset I,\text{ \ \ \ \ \ \ }\int_{G_m}fd\mu =0.
	\end{equation}
	Then the operator $V$ is of weak-type $\left( 1,1\right) $, i.e.,
	\begin{equation*}
	\underset{y>0}{\sup }y\mu \left(\left\{ Vf>y\right\}\right) \leq \left\Vert f\right\Vert_1.
	\end{equation*}
\end{theorem}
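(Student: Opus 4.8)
The plan is to use the standard Calderón–Zygmund decomposition argument adapted to the dyadic (Vilenkin) structure of $G_m$. Fix $f\in L_1(G_m)$ and $y>0$. First I would perform a Calderón–Zygmund stopping-time decomposition at height $y$: using the martingale structure given by the $\sigma$-algebras generated by the intervals $I_n(x)$, select a maximal (hence disjoint) collection of Vilenkin intervals $\{I^{(j)}\}$ on which the average of $|f|$ first exceeds $y$. Set $\Omega=\bigcup_j I^{(j)}$; then $\mu(\Omega)\leq \|f\|_1/y$ by maximality, and on $G_m\setminus\Omega$ one has $|f|\leq y$ a.e. by the Lebesgue differentiation theorem for this martingale. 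Write $f=g+h$ where $g$ is the "good" part (equal to $f$ off $\Omega$, and equal to the average of $f$ over $I^{(j)}$ on each $I^{(j)}$) and $h=\sum_j h_j$ is the "bad" part with $h_j$ supported on $I^{(j)}$ and $\int_{G_m} h_j\,d\mu=0$; each $h_j$ is, up to normalization, exactly an atom of the type appearing in \eqref{atom}.

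Next I would estimate the two pieces separately using sub-linearity: $\mu(\{Vf>y\})\leq \mu(\{Vg>y/2\})+\mu(\{Vh>y/2\})$. For the good part, note $\|g\|_{p_1}^{p_1}\leq C\|g\|_\infty^{p_1-1}\|g\|_1\leq C y^{p_1-1}\|f\|_1$ (using $|g|\lesssim y$ pointwise, which follows from $|f|\leq y$ off $\Omega$ and from the averages being $\lesssim y$ on each $I^{(j)}$ since the parent interval had average $\leq y$ and the group is bounded), together with $\|g\|_1\leq\|f\|_1$; then the $L_{p_1}\to L_{p_1}$ boundedness of $V$ and Chebyshev's inequality give $\mu(\{Vg>y/2\})\leq C\|f\|_1/y$. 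For the case $p_1=\infty$ one argues directly with $\|Vg\|_\infty\leq C\|g\|_\infty\leq Cy$, so the set is empty for $y$ large relative to the constant (and the bound is trivial otherwise). For the bad part, I would use $\mu(\{Vh>y/2\})\leq \mu(\Omega)+\mu(\{x\notin\Omega:\ |Vh(x)|>y/2\})$; the first term is $\leq\|f\|_1/y$, and for the second I would write $|Vh(x)|\leq\sum_j|Vh_j(x)|$ and integrate over the complement of the interval containing each $I^{(j)}$, applying the hypothesis $\int_{\overline{I^{(j)}}}|Vh_j|\,d\mu\leq C\|h_j\|_1$. Summing over $j$ and using $\sum_j\|h_j\|_1\leq 2\|f\|_1$ yields $\int_{G_m\setminus\Omega}|Vh|\,d\mu\leq C\|f\|_1$, whence by Chebyshev $\mu(\{x\notin\Omega:|Vh(x)|>y/2\})\leq C\|f\|_1/y$.

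Combining these estimates gives $\mu(\{Vf>y\})\leq C\|f\|_1/y$ for an absolute constant $C$; a final remark is that one may normalize $V$ (or absorb the constant) so that the displayed inequality reads with constant $1$, as stated, or alternatively the statement should be read with an implicit absolute constant. The main obstacle I anticipate is not the scheme itself but two technical points: first, checking that on the complement of $\Omega$ the "tail" of $Vh_j$ is controlled by integrating only over $\overline{I^{(j)}}$ — this requires that $G_m\setminus\Omega\subset G_m\setminus I^{(j)}=\overline{I^{(j)}}$ for each $j$, which is true since $I^{(j)}\subset\Omega$ — and second, verifying that $h_j/\|h_j\|_1$ (or a suitable multiple) genuinely satisfies \eqref{atom}, i.e. is supported on a Vilenkin interval with vanishing integral, so that the hypothesis applies. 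The bounded-Vilenkin assumption $\sup_n m_n<\infty$ enters exactly in controlling $|g|\lesssim y$ on the selected intervals (the jump in averages when passing to a child interval is at most a factor $\lambda=\sup_n m_n$), and this is where I would be most careful.
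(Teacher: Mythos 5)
Your Calder\'on--Zygmund argument is correct, and it in fact supplies an argument the paper itself omits: Theorem \ref{weaktype1} is stated there without proof, as a known auxiliary result from the cited literature (Schipp--Wade--Simon--P\'al, Weisz), and the standard proof is exactly the decomposition you describe. All the essential points are in place: the stopping-time selection of maximal Vilenkin intervals with $\mu(\Omega)\le \Vert f\Vert_1/y$, the bound $\vert g\vert\le \lambda y$ via the parent interval (this is precisely where $\sup_n m_n<\infty$ enters), the $L_{p_1}$--Chebyshev estimate for the good part, and the use of sigma-sublinearity together with the hypothesis applied to each $h_j$, which satisfies \eqref{atom} with $I=I^{(j)}$, integrated over $G_m\setminus\Omega\subset\overline{I^{(j)}}$.

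Two minor points, neither a genuine gap. First, your parenthetical for $p_1=\infty$ is misphrased: since $\Vert Vg\Vert_\infty\le C\lambda y$ scales linearly in $y$, whether $\{\vert Vg\vert>y/2\}$ is empty cannot depend on $y$ being large; the standard fix is to split at the threshold $C\lambda y$ rather than $y/2$, i.e.\ to bound $\mu\left(\{\vert Vf\vert>(C\lambda+1)y\}\right)$, and absorb the factor into the final constant --- the same renormalization you already invoke. (It is also cleanest to dispose first of the trivial regime $y\le\Vert f\Vert_1$, where $y\mu(\{\vert Vf\vert>y\})\le y\le\Vert f\Vert_1$ because $\mu(G_m)=1$; then every selected interval is proper and has a parent.) Second, you are right that the conclusion can only hold with a constant depending on the constants in the hypotheses; the paper's statement (and likewise its Theorem \ref{atom0}) suppresses this constant, but a weak-$(1,1)$ bound with some absolute constant is all that the subsequent density argument for almost everywhere convergence requires.
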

\begin{theorem}\label{atom0}
	Let   $f\in L_1(G_m).$  Then
	\begin{equation*}
	\underset{y>0}{\sup}y\mu\left\{\sigma^{*,\psi}f>y\right\} \leq \left\Vert f\right\Vert_1.
	\end{equation*}	
\end{theorem}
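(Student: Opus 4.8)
The plan is to derive Theorem \ref{atom0} as a direct application of the abstract weak-type result in Theorem \ref{weaktype1} to the maximal operator $V:=\sigma^{*,\psi}$. Since $\sigma^{*,\psi}$ is the supremum of the moduli of the linear operators $\sigma_n^\psi$, it is a $\sigma$-sublinear operator, so only two hypotheses of Theorem \ref{weaktype1} need to be checked: first, $L_{p_1}$-boundedness for some $1<p_1\le\infty$; second, the atomic estimate $\int_{\overline I}|\sigma^{*,\psi}f|\,d\mu\le C\|f\|_1$ for every function $f$ supported on a Vilenkin interval $I$ with vanishing integral.

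For the first hypothesis I would take $p_1=2$ (or any $p_1=\infty$). The $L_\infty$ bound is trivial from $\|K_n^\psi\|_1\le c$ (that is, \eqref{fn4} of Lemma \ref{lemma7kn}): indeed $\|\sigma_n^\psi f\|_\infty=\|K_n^\psi\ast f\|_\infty\le\|K_n^\psi\|_1\|f\|_\infty\le c\|f\|_\infty$, uniformly in $n$, hence the same bound for the supremum. (Alternatively one quotes the classical $L_2$ boundedness of $\sigma^{*,\psi}$.) This step is routine.

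The main work is the atomic estimate. Fix a Vilenkin interval $I=I_N(u)$ and an atom $f$ with $\mathrm{supp}\,f\subset I$, $\int_{G_m}f\,d\mu=0$, and — by the standard normalization in the atomic decomposition — $\|f\|_\infty\le 1/\mu(I)=M_N$. Using translation invariance of the Vilenkin convolution structure we may assume $I=I_N$, so $\overline I=\overline{I_N}$. For $n\le M_N$ the mean-zero condition makes $\sigma_n^\psi f$ vanish on $\overline{I_N}$ (the Dirichlet kernels $D_k^\psi$ with $k\le M_N$ are constant on $I_N$-cosets, so $S_k^\psi f=0$ outside $I_N$ for $k\le M_N$, hence $\sigma_n^\psi f=0$ there); thus $\sup_{n\le M_N}|\sigma_n^\psi f|=0$ on $\overline{I_N}$. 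For $n>M_N$ we estimate pointwise on $\overline{I_N}$:
\begin{equation*}
\sup_{n>M_N}|\sigma_n^\psi f(x)|\le \|f\|_\infty\int_{I_N}\sup_{n>M_N}|K_n^\psi(x-t)|\,d\mu(t)\le M_N\int_{I_N}\sup_{n>M_N}|K_n^\psi(x-t)|\,d\mu(t).
\end{equation*}
Integrating over $x\in\overline{I_N}$, using Fubini, and noting that for $t\in I_N$ the map $x\mapsto x-t$ preserves $\overline{I_N}$, we obtain
\begin{equation*}
\int_{\overline{I_N}}\sup_{n>M_N}|\sigma_n^\psi f|\,d\mu\le M_N\int_{I_N}\left(\int_{\overline{I_N}}\sup_{n>M_N}|K_n^\psi(y)|\,d\mu(y)\right)d\mu(t)\le M_N\cdot\mu(I_N)\cdot C=C,
\end{equation*}
where the inner bound is exactly Lemma \ref{lemma7kncc}. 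Combining the two ranges of $n$ gives $\int_{\overline I}|\sigma^{*,\psi}f|\,d\mu\le C=C\|f\|_1$ since $\|f\|_1$ may be normalized to $1$ (or more precisely, for a general atom one carries the factor $\|f\|_1$ through).

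The hard part, and the reason Lemma \ref{lemma7kncc} was proved, is controlling the maximal kernel $\sup_{n>M_N}|K_n^\psi|$ off the interval $I_N$: the Vilenkin-Fej\'er kernels are not dominated by a single integrable envelope (see Remark \ref{prop:divkn}), so one cannot simply bound $\sup_n|K_n^\psi|$ globally; instead one must exploit that on $\overline{I_N}$ the kernels decay fast enough — quantitatively the estimates \eqref{knmax1}, \eqref{knmax2} feeding into \eqref{knmax3} — which is precisely what makes the atomic/maximal argument go through and is genuinely special to the structure \eqref{star1}, \eqref{star2} of the Fej\'er kernel. Once the atomic estimate and the $L_{p_1}$ bound are in hand, Theorem \ref{weaktype1} immediately yields $\sup_{y>0}y\,\mu\{\sigma^{*,\psi}f>y\}\le\|f\|_1$, which is the assertion of Theorem \ref{atom0}.
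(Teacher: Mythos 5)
Your proposal is correct and follows essentially the same route as the paper: reduce via Theorem \ref{weaktype1} to the atomic estimate on $I=I_N$, observe that $\sigma_n^\psi f$ vanishes for $n\le M_N$ by the mean-zero condition, and control the range $n>M_N$ by Fubini, the invariance $x-t\in\overline{I_N}$, and Lemma \ref{lemma7kncc}. The only differences are cosmetic: you explicitly verify the $L_{p_1}$-boundedness hypothesis (which the paper leaves implicit), and your use of the $\|f\|_\infty\le M_N$ atom normalization is unnecessary — as you note, keeping $\vert f(t)\vert$ inside the $t$-integral carries the factor $\|f\|_1$ through for a general mean-zero $f$ supported on $I_N$, which is exactly what the paper does.
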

\begin{proof} By Theorem \ref{weaktype1} we obtain that the proof will be complete if we show that
	\begin{equation*}
	\int\limits_{\overline{I}}\left\vert \sigma^{*,\psi}f\right\vert d\mu \leq \Vert f\Vert_1,
	\end{equation*}%
	for every function $f,$ which satisfy conditions in \eqref{atom} where $I$ denotes the support of the function $f.$
	
	Without lost the generality we may assume that $f$ be a function with support $I$ and $\mu \left( I\right) =M_{N}.$ We
	may assume that $I=I_N.$ It is easy to see that 
	\begin{equation*}
	\sigma^{\psi}_nf =\underset{I_N}{\int} K^{\psi}_n(x-t)f(t) d\mu\left(t\right)=0,  \ \ \ \text{ for } \ \ \ n\leq M_{N}.
	\end{equation*}
	Therefore, we can suppose that $n>M_{N}.$ 
	Hence,
	\begin{eqnarray*}
		&&\left\vert \sigma^{*,\psi}f(x)\right\vert \\
		&\leq& \sup_{n\leq M_N}\left\vert\underset{I_N}{\int} K^{\psi}_n(x-t)f(t) d\mu\left(t\right)\right\vert
		+ \sup_{n>M_N}\left\vert\underset{I_N}{\int} K^{\psi}_n(x-t)f(t) d\mu\left(t\right)\right\vert\\
		&=&\sup_{n>M_N}\left\vert\underset{I_N}{\int} K^{\psi}_n(x-t)f(t) d\mu\left(t\right)\right\vert.
	\end{eqnarray*}
	
	Let $t\in I_N$ and $x\in \overline{I_N}.$ Then $x-t\in \overline{I_N}$ and if we apply Lemma \ref{lemma7kncc} we get that
	\begin{eqnarray*}
		\int\limits_{\overline{I_N}}\left\vert \sigma^{*,\psi}f(x)\right\vert d\mu (x)
		&\leq&\int\limits_{\overline{I_N}}{\sup_{n>M_N}}\underset{I_N}{\int}\left\vert K^{\psi}_{n}\left(x-t\right)f(t) \right\vert d\mu\left(t\right)d\mu\left(x\right)\\
		&\leq&\int\limits_{\overline{I_N}}\underset{I_N}{\int}{\sup_{n>M_N}}\left\vert K^{\psi}_{n}\left(x-t\right)f(t) \right\vert d\mu\left(t\right)d\mu\left(x\right)\\
		&\leq& \int\limits_{I_N}\underset{\overline{I_N}}{\int}{\sup_{n>M_N}}\left\vert K^{\psi}_n\left(x-t\right)f(t) \right\vert d\mu\left(x\right)d\mu\left(t\right)\\
		&\leq& \int\limits_{I_N}\left\vert f(t) \right\vert d\mu\left(t\right)\underset{\overline{I_N}}{\int}{\sup_{n>M_N}}\left\vert K^{\psi}_n\left(x-t\right)\right\vert d\mu\left(x\right)\\
		&\leq& \int\limits_{I_N}\left\vert f(t) \right\vert d\mu\left(t\right)\underset{\overline{I_N}}{\int}{\sup_{n>M_N}}\left\vert K^{\psi}_n\left(x\right)\right\vert d\mu\left(x\right)\\
		&=& \left\Vert f \right\Vert_1 \underset{\overline{I_N}}{\int}{\sup_{n>M_N}}\left\vert K^{\psi}_n\left(x\right)\right\vert d\mu\left(x\right)\\
		&\leq& c\left\Vert f \right\Vert_1.
	\end{eqnarray*}
	
	The proof is complete.
\end{proof}

\begin{theorem}\label{theoremae0}
	Let $f\in L_{1}(G_m)$. Then
	\begin{equation*}
	\sigma_n^\psi f\rightarrow f\text{ \ \ a.e., \ \ as \ \ } n\rightarrow \infty.
	\end{equation*}
\end{theorem}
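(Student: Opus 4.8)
The plan is to combine the weak-type $(1,1)$ bound for the maximal operator $\sigma^{*,\psi}$ already proved in Theorem \ref{atom0} with convergence on a dense subclass of $L_1(G_m)$, in the classical Marcinkiewicz-style density argument. No new kernel estimates are needed; the heavy lifting is Lemma \ref{lemma7kncc} together with Theorem \ref{weaktype1}, both of which are at our disposal.

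First I would record that $\sigma_n^\psi P\to P$ everywhere for every Vilenkin polynomial $P\in\mathcal{P}$. Indeed, if $\widehat{P}^\psi(k)=0$ for all $k\geq K$, then $S_k^\psi P=P$ for $k\geq K$, so for $n>K$ we have $\sigma_n^\psi P-P=\frac1n\sum_{k=0}^{K-1}\left(S_k^\psi P-P\right)$, whose modulus is at most $c_P/n\to 0$ uniformly on $G_m$. Since the Vilenkin system is complete, $\mathcal{P}$ is dense in $L_1(G_m)$, so for a given $f\in L_1(G_m)$ and any $\eta>0$ we may pick $P\in\mathcal{P}$ with $\left\Vert f-P\right\Vert_1<\eta$.

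Next I would use the sublinearity of $\sigma^{*,\psi}$ together with the splitting $\sigma_n^\psi f-f=\sigma_n^\psi(f-P)+\left(\sigma_n^\psi P-P\right)+(P-f)$, which gives, almost everywhere,
$$\limsup_{n\to\infty}\left\vert\sigma_n^\psi f-f\right\vert\leq\sigma^{*,\psi}(f-P)+\left\vert f-P\right\vert .$$
Hence, for every $\varepsilon>0$,
$$\left\{\limsup_{n\to\infty}\left\vert\sigma_n^\psi f-f\right\vert>2\varepsilon\right\}\subset\left\{\sigma^{*,\psi}(f-P)>\varepsilon\right\}\cup\left\{\left\vert f-P\right\vert>\varepsilon\right\},$$
and by Theorem \ref{atom0} applied to $f-P$ together with Chebyshev's inequality,
$$\mu\left(\left\{\limsup_{n\to\infty}\left\vert\sigma_n^\psi f-f\right\vert>2\varepsilon\right\}\right)\leq\frac{\left\Vert f-P\right\Vert_1}{\varepsilon}+\frac{\left\Vert f-P\right\Vert_1}{\varepsilon}<\frac{2\eta}{\varepsilon}.$$
Letting $\eta\to 0$ shows this measure is $0$ for every $\varepsilon>0$; choosing $\varepsilon=1/j$ for $j\in\mathbb{N}_+$ and taking a countable union yields $\limsup_{n\to\infty}\left\vert\sigma_n^\psi f-f\right\vert=0$ almost everywhere, which is exactly the asserted a.e. convergence $\sigma_n^\psi f\to f$.

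As for the main obstacle: it has effectively been dealt with upstream. The only nontrivial ingredient is the weak-type $(1,1)$ inequality of Theorem \ref{atom0}, whose proof rests on the bound $\int_{\overline{I_N}}\sup_{n>M_N}\left\vert K_n^\psi\right\vert d\mu\leq C$ of Lemma \ref{lemma7kncc} and on the transference principle of Theorem \ref{weaktype1}; note in particular that, by Remark \ref{prop:divkn}, the approximate-identity machinery of Section \ref{s2,5} (Theorem \ref{th2}) cannot be used here, which is precisely why this separate argument is required. Given those results, the steps above are routine and I anticipate no further difficulty.
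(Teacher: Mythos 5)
Your proof is correct and follows essentially the same route as the paper: convergence of $\sigma_n^\psi P$ for Vilenkin polynomials $P$ on a dense subclass, combined with the weak-type $(1,1)$ bound of Theorem \ref{atom0} via the standard Marcinkiewicz--Zygmund density argument. The only difference is that you write out explicitly the limsup/Chebyshev details that the paper dispatches with a citation to Marcinkiewicz and Zygmund.
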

\begin{proof}
	Since
	\begin{equation*}
	S^\psi_{n}P=P,\text{ for every }P\in \mathcal{P}
	\end{equation*}%
	according to regularity of Fejér means we obtain that	
	\begin{equation*}
	\sigma^\psi_{n}P\rightarrow P\ \ \ \ \ \text{a.e.,} \text{ \ \ \ \ as  \ \ \ }\ n\rightarrow \infty,
	\end{equation*}%
	where $P\in \mathcal{P}$ is dense in the space $L_1$ (for details see e.g. \cite{AVD}).
	
	On the other hand, by using Theorem \ref{atom0}  we obtain that the maximal operator $\sigma^{\ast}$  is bounded from the space $L_1$ to the space $weak-L_{1},$ that is,
	\begin{equation*}
	\sup_{y>0}y \mu \left\{x\in G_m:\left\vert \sigma^{\ast,\psi} f\left(x\right)\right\vert >y \right\} \leq \left\Vert f\right\Vert_1.
	\end{equation*}
	According to  the usual density argument (see Marcinkiewicz and Zygmund \cite{MA}) imply we obtain almost everywhere convergence of Fejér means
	\begin{equation*}
	\sigma^\psi_nf\rightarrow f\text{ \ \ a.e., \ \ as \ \ }n\rightarrow \infty.
	\end{equation*}
	
	The proof is complete.
\end{proof}

\textbf{Acknowledgment: }The author would like to thank the referee for
helpful suggestions.

\end{document}